  \def\MR#1{}
\newcommand{\algO}{\mathrm{O}}
\newcommand\Gm {\mathbb{G}_m}
\newcommand{\oo}{\mathcal{O}}
\newcommand{\p}{\mathfrak{p}}
\newcommand\Z{\mathbb{Z}}
\newcommand\Q{\mathbb{Q}}
\newcommand{\colim}{\mathop{{\mathrm{colim}}}}
\newcommand\et{\textup{\'et}}
\newcommand\Aut{\textup{Aut}}
\newcommand\Gal{\mathrm{Gal}}
\newcommand\Shaf{\mathrm{Shaf}}
\newcommand\Pic{\mathrm{Pic}}
\newcommand\Spec{\mathop{\mathrm{Spec}}}
\newcommand\Spf{\mathop{\mathrm{Spf}}}
\newcommand\chara{\mathop{\mathrm{char}}}
\newcommand\Br{\mathop{\mathrm{Br}}}
\newcommand\Supp{\mathop{\mathrm{Supp}}}
\newcommand\ord{\mathop{\mathrm{ord}}}
\newcommand\m{\mathfrak{m}}
\theoremstyle{plain}
\newtheorem*{lemma*}{Lemma}
\newtheorem*{prop*}{Proposition}
\newtheorem*{theorem*}{Theorem}
\newtheorem*{claim*}{Claim}
\newtheorem{definition*}{Definition}
\theoremstyle{plain}
\newtheorem{theoremsub}[subsection]{Theorem}
\newtheorem{propsub}[subsection]{Proposition}
\newtheorem{lemmasub}[subsection]{Lemma}
\newtheorem{corollarysub}[subsection]{Corollary}
\theoremstyle{definition}
\newtheorem{definitionsub}[subsection]{Definition}
\newtheorem{remarksub}[subsection]{Remark}
\begin{document}
\title{On the Shafarevich conjecture for Enriques surfaces}
\author[T.\ Takamatsu]{Teppei Takamatsu}

\date{\today}

\subjclass[2010]{Primary 14J28; Secondary 11G35}
\keywords{Enriques surfaces, Shafarevich conjecture}

\address{Graduate School of Mathematical Sciences, The University of Tokyo, Komaba, Tokyo, 153-8914, Japan}
\email{teppei@ms.u-tokyo.ac.jp}

\maketitle

\begin{abstract}
Enriques surfaces are minimal surfaces of Kodaira dimension $0$ with $b_{2}=10$. If we work with a field of characteristic away from $2$, Enriques surfaces admit double covers which are K3 surfaces. In this paper, we prove the Shafarevich conjecture for Enriques surfaces by reducing the problem to the case of K3 surfaces. 
In our formulation of the Shafarevich conjecture, we use the notion ``admitting a cohomological good K3 cover'', which includes not only good reduction but also flower pot reduction.
\end{abstract}
\setcounter{section}{0}
\section{Introduction}
The Shafarevich conjecture for abelian varieties states the finiteness of isomorphism classes of abelian varieties of a fixed dimension over a fixed number field admitting good reduction away from a fixed finite set of finite places. The purpose of this paper is to formulate and prove this conjecture for Enriques surfaces. Our main theorem is the following.

\begin{theoremsub}[Theorem \ref{shafenr}]\label{introshafenr}
Let $F$ be a finitely generated field over $\Q$, and $R$ be a finite type algebra over $\Z$ which is a normal domain with the fraction field $F$.
Then, the set 
\[
\left\{X \left|
\begin{array}{l}
X\colon \textup{Enriques surface over }F, \\
\textup{for any height } 1 \textup{ prime ideal } \p \in \Spec R, \\
X \textup{ admits a cohomological good K3 cover at } \p
\end{array}
\right.\right\}
/F \textup{-isom}
\]
is finite.
\end{theoremsub}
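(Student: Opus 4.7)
The plan is to reduce the Shafarevich-type finiteness for Enriques surfaces to the analogous finiteness for K3 surfaces via the canonical K3 double cover. Since we work over a field of characteristic zero, every Enriques surface $X$ over $F$ carries a canonical \'etale double cover $\pi \colon Y \to X$, where $Y$ is a K3 surface, together with a fixed-point-free involution $\iota \colon Y \to Y$ whose quotient is $X$. The map $X \mapsto (Y,\iota)$ is an injection (up to $F$-isomorphism) on source onto pairs up to equivariant $F$-isomorphism, so it suffices to bound the image.

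The first task would be to show that the hypothesis ``$X$ admits a cohomological good K3 cover at $\p$'' transfers, at every height-$1$ prime $\p$ of $R$, to the hypothesis needed to apply a Shafarevich theorem to $Y$; this should be essentially by construction of the notion (the abstract flags that the definition is arranged to subsume both good and flower-pot reduction of $Y$, which is precisely the class of reductions for which finiteness of K3 surfaces has been established by Andr\'e, She, and the author). Having done this, one spreads $R$ out so that $\pi$ extends over an open subscheme and invokes the K3 Shafarevich theorem to conclude that the set of $F$-isomorphism classes of K3 surfaces $Y$ that can arise as the canonical cover of some $X$ in our set is finite.

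It then remains to bound the fibre of the map $X \mapsto Y$, i.e., to show that a fixed K3 surface $Y$ over $F$ admits only finitely many Enriques quotients $X$ satisfying our hypotheses, up to $F$-isomorphism. After base change to $\bar F$, this reduces to finiteness of $\bar F$-conjugacy classes of fixed-point-free involutions on $Y_{\bar F}$, which follows from Sterk's finiteness theorem asserting that $\Aut(Y_{\bar F})$ acts on $\Pic(Y_{\bar F})$ with image of finite index in a suitable orthogonal group; the descent from $\bar F$ to $F$-forms is controlled by a Galois cohomology set of a finitely generated group with bounded ramification, to which one applies a Hermite--Minkowski-style finiteness.

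The main obstacle I expect is the last step: even after fixing the K3 cover $Y$, one must simultaneously control $F$-rational involutions on $Y$, their conjugacy, and the resulting field of definition for the Enriques quotient, while ensuring that a twist preserving the cohomological good K3 cover hypothesis varies in a finite set. Step 1 is a matter of tracing definitions, and Step 2 is a direct invocation of the K3 Shafarevich theorem, so the essential arithmetic content of the proof sits in the descent and twist-counting argument.
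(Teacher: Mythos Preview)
Your proposal has a genuine gap at the very first step, where you assert that $X$ carries a \emph{canonical} K3 double cover $Y$ and that the hypothesis at each $\p$ transfers to this fixed $Y$ ``essentially by construction''. Over a non-algebraically-closed field the K3 double cover is \emph{not} canonical: it depends on a choice of isomorphism $\iota\colon \omega_{X/F}^{\otimes 2}\simeq \oo_X$, and two choices differing by $\alpha\in F^\times$ yield K3 surfaces that are quadratic twists of one another by $F(\sqrt\alpha)/F$ (the two Galois actions on $\widetilde X_{\overline F}$ differ by the Enriques involution). In particular, whether $H^2_{\et}(\widetilde X_{\overline F},\Q_\ell)$ is unramified at $\p$ genuinely depends on this choice --- precisely on the parity of $\ord_\p(\alpha)$. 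The definition of ``admits a cohomological good K3 cover at $\p$'' is existential in $\widetilde X$ and allows a \emph{different} cover at each prime, so there is a priori no single $Y$ to which the K3 Shafarevich theorem applies. This is exactly where the arithmetic content of the paper's proof lies: one first shrinks $\Spec R$ so that $\Pic(\Spec R)=0$, fixes one cover $\widetilde X'$ that is good outside a finite set $S$, records for each $\p\in S$ the discrepancy $\alpha_\p\in F^\times$, and then uses triviality of $\Pic(\Spec R)$ to produce a global $\alpha\in F^\times$ with $\ord_\p(\alpha)\equiv\ord_\p(\alpha_\p)\pmod 2$ at every height-$1$ prime. Twisting $\widetilde X'$ by this $\alpha$ yields a single K3 cover with unramified cohomology everywhere, and only then can one invoke the K3 Shafarevich theorem.

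A secondary remark: your fibre-bounding step (finiteness of Enriques quotients of a fixed $Y$) via passage to $\overline F$, Sterk, and a Hermite--Minkowski descent is more elaborate than necessary. The paper argues directly over $F$: by Bright--Logan--van Luijk there is a homomorphism $\Aut_F(Y)\ltimes R_Y \to \algO(\Pic Y)$ with finite kernel and finite-index image, and Borel's finiteness of conjugacy classes of torsion elements in arithmetic groups then bounds the conjugacy classes of involutions in $\Aut_F(Y)$. No separate Galois-cohomology descent is needed.
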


Here, we say that $X$ admits a cohomological good K3 cover at $\p$ if there exists a K3 double cover of $X$ whose $\ell$-adic cohomologies are unramified at $\p$ (see Definition \ref{cohom good}). 
In particular, we prove the original Shafarevich conjecture for Enriques surfaces over finitely generated fields over $\Q$.

\begin{corollarysub}[Corollary \ref{corshafenr}]\label{introcorshafenr}
Let $F$ be a finitely generated field over $\Q$, and $R$ be a finite type algebra over $\Z$ which is a normal domain with the fraction field $F$.
Then, the set 
\[
\left\{X \left|
\begin{array}{l}
X\colon \textup{Enriques surface over }F, \\
X \textup{ admits good reduction at any height }1 \textup{ prime ideal }\p \in \Spec R
\end{array}
\right.\right\}
/F\textup{-isom}
\]
is finite.
\end{corollarysub}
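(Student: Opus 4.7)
The plan is to deduce Corollary \ref{introcorshafenr} directly from Theorem \ref{introshafenr} by showing that if an Enriques surface $X/F$ has good reduction at a height $1$ prime $\p\in\Spec R$, then it admits a cohomological good K3 cover at $\p$ in the sense of Definition \ref{cohom good}. Granted this implication, the set in Corollary \ref{introcorshafenr} is contained in the set of Theorem \ref{introshafenr}, and finiteness is inherited. I would localize and work over the strict henselization $\mathcal{O}=\mathcal{O}_{R,\p}^{\mathrm{sh}}$ with fraction field $K$ and residue field $k$, fixing a smooth proper model $\mathcal{X}/\mathcal{O}$ of $X_K$.

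The canonical sheaf $\omega_{X/F}$ is a nonzero $2$-torsion class in $\Pic(X)$ and defines the canonical K3 double cover $Y\to X$ over $F$. The first step is to extend this class to an element $L\in\Pic(\mathcal{X})[2]$, and then form the double cover
\[
\mathcal{Y}\;=\;\Spec_{\mathcal{X}}\bigl(\mathcal{O}_{\mathcal{X}}\oplus L\bigr)\longrightarrow\mathcal{X}.
\]
When the residue characteristic of $\p$ is prime to $2$, the $2$-torsion of the relative Picard scheme $\Pic_{\mathcal{X}/\mathcal{O}}$ is étale, so $L$ exists uniquely and $\mathcal{Y}\to\mathcal{X}$ is finite étale; hence $\mathcal{Y}/\mathcal{O}$ is smooth proper with K3 geometric fibers. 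Smooth--proper base change then immediately gives unramifiedness of $H^{\ast}(Y_{\bar K},\Q_\ell)$ at $\p$ for every $\ell$ invertible in $\mathcal{O}$, which is the required cohomological good K3 cover condition.

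The main obstacle is the case when the residue characteristic of $\p$ equals $2$: the canonical double cover then becomes inseparable on the special fiber when $\mathcal{X}_k$ is of classical or supersingular type, so the naive $\mathcal{Y}$ need not be smooth and the argument above breaks. Here I would use crucially that Definition \ref{cohom good} permits flower pot reduction rather than merely good reduction: after a suitable base change $\mathcal{O}'/\mathcal{O}$, one can hope to construct a flower pot model of $Y$ by resolving the singularities coming from the inseparable double cover, and then verify unramifiedness of $\ell$-adic cohomology directly. A complementary cohomological route is to use, for $\ell\neq 2$, the Hochschild--Serre decomposition
\[
H^{\ast}(Y_{\bar K},\Q_\ell)\;\cong\;H^{\ast}(X_{\bar K},\Q_\ell)\;\oplus\;H^{\ast}(X_{\bar K},\mathcal{L})
\]
attached to the $\Z/2$-cover, where $\mathcal{L}$ is the rank $1$ $\Q_\ell$-local system cut out by $\omega_{X/F}$; unramifiedness of both summands would then follow from smooth--proper base change for $\mathcal{X}/\mathcal{O}$, provided one shows that the character defining $\mathcal{L}$ extends to $\pi_1^{\et}(\mathcal{X})$, which again reduces to extending the $2$-torsion class. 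Pinning down this residue-characteristic-$2$ analysis is the heart of the deduction.
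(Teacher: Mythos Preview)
Your argument for primes $\p$ of residue characteristic $\neq 2$ is correct and is precisely Lemma~\ref{goodcohgood}: a smooth proper model of $X$ produces a smooth proper model of a K3 double cover, and smooth--proper base change gives unramifiedness.

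The gap is that you have declared the residue-characteristic-$2$ case to be ``the heart of the deduction'' and left it open, when in fact it requires no work whatsoever. Replacing $R$ by $R[1/2]$ only enlarges the set in Corollary~\ref{introcorshafenr} (there are fewer height~$1$ primes, hence a weaker good-reduction hypothesis), so it suffices to prove finiteness after inverting $2$. Once $1/2\in R$, every height~$1$ prime has residue characteristic $\neq 2$, your argument applies at every prime, and Theorem~\ref{introshafenr} finishes. This is exactly the paper's proof of Corollary~\ref{corshafenr}: one line to assume $1/2\in R$, then invoke Lemma~\ref{goodcohgood} and Theorem~\ref{shafenr}.

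Observe also that Definition~\ref{cohom good} is only formulated for residue characteristic $p\neq 2$, so the condition in Theorem~\ref{introshafenr} imposes nothing at primes above~$2$ to begin with; your proposed detours through flower-pot resolutions or Hochschild--Serre with $\ell=2$ coefficients are aimed at a target that does not exist. (Incidentally, your reading that Definition~\ref{cohom good} ``permits flower pot reduction'' is not quite right: the definition is purely about unramifiedness of $\ell$-adic cohomology of the cover; the flower-pot remark in the paper is only an example showing that this condition is strictly weaker than good reduction of $X$ itself.)
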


Theorem \ref{introshafenr} is, in fact, a direct corollary of the Shafarevich conjecture for K3 surfaces (\cite{She2017}, \cite{Takamatsu2018}). 
To conclude Corollary \ref{introcorshafenr} from Theorem \ref{introshafenr}, we recall the properties of good reduction of Enriques surfaces (see Proposition \ref{ito} and Lemma \ref{goodcohgood}).
We shall make some remarks on the statement of Theorem \ref{introshafenr}. 
It is known that there exists an Enriques surface over a local field which does not admit good reduction, but whose K3 double cover admits good reduction (see \cite[Theorem 3.6]{Liedtke2018}).
Our Theorem \ref{introshafenr} generalizes the original Shafarevich conjecture (Corollary \ref{introcorshafenr}) to deal with such phenomena.

\subsection*{Acknowledgments}
The author is deeply grateful to his advisor Naoki Imai for his deep encouragement and helpful advice. The author also thanks Tetsushi Ito for the proof of Proposition \ref{ito}, Yohsuke Matsuzawa for helpful suggestions, and Yuya Matsumoto for pointing out my misunderstandings about cohomologies of K3 double covers (Remark \ref{remarkcohgood}). 

\section{Enriques surfaces and good reduction}
First, we recall the definition of Enriques surfaces.

\begin{definitionsub}
For any field $k$, an \emph{Enriques surface over $k$} is a smooth projective surface over $k$ satisfying $\omega_{X_{\overline{k}}/\overline{k}} \equiv \oo_{X_{\overline{k}}}$ and $b_{2}(X_{\overline{k}})=10$, where $\equiv$ denotes numerical equivalence.
\end{definitionsub}

If 
$k$ is a field of characteristic away from $2$,
it is well-known that an Enriques surface over $k$ has a K3 double cover (see \cite[Proposition 10.14]{Badescu2001}). 

The main purpose of this paper is to study the Shafarevich conjecture for Enriques surfaces by reducing the problem to the case of K3 surfaces.
In the rest of this section, we will define the good reduction of Enriques surfaces, and study their properties.

\begin{definitionsub}
Let $K$ be a discrete valuation field, and $X$ be an Enriques surface over $K$. We say $X$ admits good reduction if there exists a smooth proper algebraic space $\mathcal{X}$ over $\oo_{K}$ such that $\mathcal{X}_{K}\simeq X$.
\end{definitionsub}

First, we recall the lifting property of line bundles on Enriques surfaces, which was proved by Liedtke.

\begin{lemmasub}\label{lemmaito}
Let $R$ be a complete discrete valuation ring with the residue field $k$. Let $\mathcal{X}\rightarrow \Spf R$ be a formal deformation of Enriques surfaces, i.e. proper smooth formal scheme with the special fiber $\mathcal{X}_{k}$ which is an Enriques surface.
If $\mathcal{L} \in \Pic(\mathcal{X}_{k})$, then $\mathcal{L}^{\otimes 2}$ extends to $\Pic (\mathcal{X})$. 
Moreover, if $H^{2}(\mathcal{X}_{k},\oo) = 0$ (e.g.\  the case of $\chara k \neq 2$), then $\mathcal{L}$ extends to $\Pic (\mathcal{X})$.
\end{lemmasub}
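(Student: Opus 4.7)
The plan is to lift $\mathcal{L}$ step by step along the chain of infinitesimal thickenings $\mathcal{X}_{0}\hookrightarrow\mathcal{X}_{1}\hookrightarrow\mathcal{X}_{2}\hookrightarrow\cdots$, where $\mathcal{X}_{n}:=\mathcal{X}\times_{\Spf R}\Spec(R/\m_R^{n+1})$ and $\mathcal{X}_{0}=\mathcal{X}_{k}$. Each closed immersion $\mathcal{X}_{n-1}\hookrightarrow\mathcal{X}_n$ is a square-zero thickening with ideal $\m_R^{n}/\m_R^{n+1}\otimes_k\oo_{\mathcal{X}_0}$, and the associated truncated exponential sequence
\[
1\longrightarrow \m_R^{n}/\m_R^{n+1}\otimes_k\oo_{\mathcal{X}_0}\longrightarrow \oo_{\mathcal{X}_n}^{\times}\longrightarrow \oo_{\mathcal{X}_{n-1}}^{\times}\longrightarrow 1
\]
produces, for each line bundle $\mathcal{L}_{n-1}$ on $\mathcal{X}_{n-1}$, an obstruction class $\mathrm{ob}(\mathcal{L}_{n-1})\in H^{2}(\mathcal{X}_0,\oo_{\mathcal{X}_0})\otimes_k\m_R^{n}/\m_R^{n+1}$ whose vanishing is equivalent to the existence of a lift of $\mathcal{L}_{n-1}$ to $\mathcal{X}_n$. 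Since $\Pic(\mathcal{X})=\varprojlim_n\Pic(\mathcal{X}_n)$ for the formal scheme, any compatible sequence of lifts yields the desired element.

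For the ``moreover'' clause I would assume $H^{2}(\mathcal{X}_0,\oo_{\mathcal{X}_0})=0$. Then every obstruction group is trivial, so starting from $\mathcal{L}_0=\mathcal{L}$ one inductively produces lifts $\mathcal{L}_n\in\Pic(\mathcal{X}_n)$ at every level, and the limit provides the extension of $\mathcal{L}$ itself. When $\chara k\neq 2$ the vanishing of $H^{2}(\mathcal{X}_0,\oo)$ is a standard Hodge-theoretic fact for Enriques surfaces, recovering that case.

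The substantive case is therefore $\chara k=2$, in which $H^{2}(\mathcal{X}_0,\oo_{\mathcal{X}_0})$ can be nonzero (one-dimensional for singular and supersingular Enriques surfaces). Here I would use additivity of the obstruction map: since $\mathrm{ob}(\mathcal{L}\otimes\mathcal{M})=\mathrm{ob}(\mathcal{L})+\mathrm{ob}(\mathcal{M})$ and $H^{2}(\mathcal{X}_0,\oo_{\mathcal{X}_0})$ is a $k$-vector space, we have $2\cdot\mathrm{ob}(-)=0$, which kills the obstruction to lifting $\mathcal{L}^{\otimes 2}$ at the first infinitesimal step. The main obstacle is propagating this to all higher thickenings: the obstruction at level $n$ depends on the chosen lift at level $n-1$, and one cannot simply square a fixed partial lift of $\mathcal{L}$ because such a lift need not exist beyond $\mathcal{X}_0$. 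Following Liedtke's strategy, I would track the ambiguity in partial lifts of $\mathcal{L}$ on the formal side (viewing these as sections of an appropriate gerbe or torsor under $H^{1}\otimes\m_R^n/\m_R^{n+1}$) and verify that the squared obstructions cancel stage by stage, thereby assembling a compatible system of lifts of $\mathcal{L}^{\otimes 2}$ whose inverse limit is the required element of $\Pic(\mathcal{X})$.
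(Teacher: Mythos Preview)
Your treatment of the ``moreover'' clause is correct and matches the paper's (both reduce to the standard vanishing-of-obstructions argument; the paper cites \cite[Corollary 8.5.5]{Fantechi2005}).

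The gap is in the characteristic~$2$ case, and you have in fact located it yourself without closing it. The additivity trick $2\cdot\mathrm{ob}(-)=0$ only helps at the very first thickening, where the class you are lifting is still visibly a square. From $\mathcal{X}_1$ onward you are lifting some $\mathcal{M}_{n-1}\in\Pic(\mathcal{X}_{n-1})$ with no factorization, and there is no mechanism in your outline forcing $\mathrm{ob}(\mathcal{M}_{n-1})$ to vanish. Your proposed remedy, to ``track the ambiguity in partial lifts of $\mathcal{L}$'', cannot get started: in the interesting case $\mathrm{ob}_1(\mathcal{L})\neq 0$, so $\mathcal{L}$ has \emph{no} lift to $\mathcal{X}_1$ and there is no torsor of partial lifts to track. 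What you describe as ``Liedtke's strategy'' is not what Liedtke does.

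The actual argument, cited in the paper as \cite[Proposition~4.4]{Liedtke2015}, bypasses the $H^{2}(\oo)$-obstruction calculus entirely and instead exploits the structure of the relative Picard scheme: for an Enriques surface $\Pic^{\tau}$ is finite flat of length~$2$, and the quotient by $\Pic^{\tau}$ is \'{e}tale over the base, so its sections lift along all thickenings; squaring kills the $\Pic^{\tau}$-component, whence $\mathcal{L}^{\otimes 2}$ lifts to a rational point of $\Pic_{\mathcal{X}_n/S_n}$ for every $n$. The paper then supplies the step needed over a non--algebraically closed residue field (which your outline does not address either): a rational point of the Picard functor need not come from a line bundle, the obstruction lying in $\Br(R/\m^{n})$; one shows $\Br(R/\m^{n})\simeq\Br(k)$ and uses that the obstruction vanishes for $n=1$ to conclude it vanishes for all $n$.
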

\begin{proof}
This is a special case of \cite[Proposition 4.4 (5)]{Liedtke2015}. We note that Liedtke's argument also works for the case where $k$ is not algebraically closed. For the sake of completeness, we recall the proof.
We put $\mathcal{X} = \colim_{n}\mathcal{X}_{n}$, where $\mathcal{X}_{n}$ is a smooth proper scheme over $S_{n}\coloneqq \Spec (R/\m^{n})$. Here, $\m$ is the maximal ideal of $R$.
Let $\widetilde{R}$ be a strict Henselization of $R$, and $\widetilde{\m}$ be the maximal ideal of $\widetilde{R}$.
Let $\widetilde{\mathcal{X}}_{n} \coloneqq (\mathcal{X}_{n})_{\widetilde{S}_{n}}$ be the base change.
By the Hochschild-Serre spectral sequence with respect to the Galois covering $\widetilde{\mathcal{X}}_{n} \rightarrow \mathcal{X}_{n}$,
we have the exact sequence
\begin{align*}
0 \to H^{1}(G_{k}, H^{0} (\widetilde{\mathcal{X}}_{n}, \oo^{\times})) \to \Pic(\mathcal{X}_{n}) \to \Pic (\widetilde{\mathcal{X}}_{n})^{G_{k}} \to H^{2}(G_{k}, H^{0}(\widetilde{\mathcal{X}}_{n},\oo^{\times})),
\end{align*}
where $G_{k}$ is an absolute Galois group of $k$.
Note that the left end is $0$, and the right end is no other than the Brauer group $\Br (R/\m^{n}) \coloneqq H^{2}_{\et}(\Spec R/\m^{n}, \Gm)$. 
We recall that there exists the natural isomorphisms $\Br (R/\m^{n}) \simeq \Br (k)$
(see \cite[COROLLAIRE 6.2]{Grothendieck1995}, \cite[COROLLAIRE 2.5]{Grothendieck1995a}).
By \cite[Proposition 4.4 (1)-(3)]{Liedtke2015}, for any positive integer $n$, $\mathcal{L}^{\otimes 2}$ lifts to a rational point of the Picard scheme $\mathcal{M}_{n} \in \Pic_{\mathcal{X}_{n}/S_{n}}(S_{n})$, 
which gives an element of $\Pic (\widetilde{\mathcal{X}}_{n})^{G_{k}}$.
By the above exact sequence, the line bundle $\mathcal{M}_{n}$ descends to the line bundle on $\mathcal{X}_{n}$ if and only if its image in $\Br (R/\m^{n})$ is zero.
If $n=1$, this image is zero, thus for any $n$, it is zero. Therefore, $\mathcal{M}_{n}$ is an extension of $\mathcal{L}^{\otimes 2}$ to $\Pic (\mathcal{X})$. 
The final statement follows from \cite[Corollary 8.5.5]{Fantechi2005}.
\end{proof}

Tetsushi Ito taught me the following proposition.

\begin{propsub}\label{ito}
Let $K$ be a complete discrete valuation field, $X$ be an Enriques surface over $K$. The following are equivalent.
\begin{enumerate}
\item
$X$ admits good reduction.
\item
There exists a smooth projective scheme $\mathcal{X}$ over $\oo_{K}$ such that $\mathcal{X}_{K} \simeq X$.
\item
There exists a smooth proper scheme $\mathcal{X}$ over $\oo_{K}$ such that $\mathcal{X}_{K} \simeq X$.
\end{enumerate}
\end{propsub}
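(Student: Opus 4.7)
My plan is to establish the cycle $(1)\Rightarrow(2)\Rightarrow(3)\Rightarrow(1)$. The two right-most implications are immediate, since every projective scheme is a proper scheme and every proper scheme is a proper algebraic space. All of the content is therefore in $(1)\Rightarrow(2)$, which I will attack by constructing a relatively ample line bundle on the given smooth proper algebraic space $\mathcal{X}/\oo_{K}$; once such a line bundle is produced, the well-known fact that a proper algebraic space carrying an ample invertible sheaf is automatically a projective scheme closes the argument.

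The special fiber $\mathcal{X}_{k}$ is an Enriques surface over the residue field $k$, hence projective by the classical results of Bombieri--Mumford. I would choose an ample line bundle $\mathcal{L}_{0}$ on $\mathcal{X}_{k}$ and apply Lemma \ref{lemmaito} to the formal completion $\widehat{\mathcal{X}}$ of $\mathcal{X}$ along the special fiber, obtaining an extension of $\mathcal{L}_{0}^{\otimes 2}$ to a formal line bundle on $\widehat{\mathcal{X}}$. Passing to $\mathcal{L}_{0}^{\otimes 2}$ sidesteps any restriction on $\chara k$, and ampleness of $\mathcal{L}_{0}$ still forces ampleness of $\mathcal{L}_{0}^{\otimes 2}$. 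One then invokes Grothendieck's existence theorem in the setting of proper algebraic spaces over a complete Noetherian local ring to algebraize this formal line bundle to an honest line bundle $\mathcal{L}\in \Pic(\mathcal{X})$ with $\mathcal{L}|_{\mathcal{X}_{k}}\simeq \mathcal{L}_{0}^{\otimes 2}$.

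To conclude, I would use that ampleness on fibers is an open condition on the base for any proper morphism. Since $\mathcal{L}|_{\mathcal{X}_{k}}$ is ample and $\mathcal{X}_{k}$ is the closed fiber of the local base $\Spec \oo_{K}$, the line bundle $\mathcal{L}$ is ample on every fiber, hence relatively ample for the structure morphism $\pi\colon \mathcal{X}\to \Spec \oo_{K}$. Because the base is affine, a proper algebraic space carrying a relatively ample line bundle is a projective scheme, which gives $(2)$.

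The principal obstacle is the algebraization step: one must invoke Grothendieck's existence theorem in the algebraic spaces category rather than the scheme category, since at the outset one does not yet know $\mathcal{X}$ is a scheme. This algebraic spaces version of Grothendieck existence is by now standard (for instance via Knutson's book or the Stacks Project), so the plan ultimately reduces to an assembly of known tools organized around Lemma \ref{lemmaito} together with a projectivity criterion for proper algebraic spaces over an affine base.
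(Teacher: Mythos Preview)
Your proposal is correct and follows essentially the same route as the paper: reduce to $(1)\Rightarrow(2)$, pick an ample line bundle on the special fiber, lift it via Lemma~\ref{lemmaito} (you sensibly pass to $\mathcal{L}_0^{\otimes 2}$ to cover all residue characteristics), and then algebraize and conclude projectivity. The paper compresses your last two paragraphs into a single citation of \cite[Corollary~8.5.6]{Fantechi2005}, but the underlying argument is the one you spell out.
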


\begin{proof}
Clearly $(2)\Rightarrow (3) \Rightarrow (1)$. So we shall prove $(1) \Rightarrow (2)$. Let $k$ be a residue field of $K$. Take a smooth proper algebraic space model $\mathcal{X}$ over $\oo_{K}$. Let $\mathcal{X}_{k}$ be the special fiber of $\mathcal{X}$, and $\widehat{\mathcal{X}}$ be the completion of $\mathcal{X}$ along the special fiber $\mathcal{X}_{k}$. By Lemma \ref{lemmaito}, there exists an ample line bundle of $\mathcal{X}_{k}$ which can be lifted to a line bundle on $\mathcal{X}$. 
Therefore by \cite[Corollary 8.5.6]{Fantechi2005}, we get the desired smooth projective model.
\end{proof}

\begin{remarksub}
As in \cite[Example 5.2]{Matsumoto2015a}, the above proposition does not hold in the case of K3 surfaces.
\end{remarksub}

\section{The Shafarevich conjecture for Enriques surfaces}
First, we recall the definition of a K3 double cover to introduce a notion which generalizes good reduction of Enriques surfaces (see also \cite[Section 7]{Chiarellotto2016}). 

\begin{definitionsub}
\label{K3 double cover}
Let $k$ be a field with characteristic away from $2$, and $X$ a K3 surface over $k$.
By fixing an isomorphism $\omega_{X/k}^{\otimes 2} \simeq \oo_{X}$,
we can equip the $\oo_{X}$-module $\oo_{X} \oplus \omega_{X/k}$ with an $\oo_{X}$-algebra structure.
We refer to a relative spectrum $\widetilde{X} \coloneqq \Spec_{X}(\oo_{X}\oplus \omega_{X/k})$ as a K3 double cover of $X$.
\end{definitionsub}

\begin{definitionsub}
\label{cohom good}
Let $K$ be a discrete valuation field with the residual characteristic $p \neq 2$, and let $X$ be an Enriques surface over $K$.
We say $X$ admits a cohomological good K3 cover if there exists a K3 double cover $\widetilde{X}$ of $X$ such that
$H^{2}_{\et}(\widetilde{X}_{\overline{K}}, \Q_{\ell})$ is an unramified representation, for $\ell \neq p$ (which is independent of $\ell$ as in \cite[Lemma 5.0.1]{Takamatsu2018}).
Yuya Matsumoto taught me that this definition depends on  the choice of $\widetilde{X}$ (see Remark \ref{remarkcohgood}).
\end{definitionsub}

\begin{remarksub} \label{remarkcohgood}
Let $\iota_{1}, \iota_{2} \colon \omega_{X/K}^{\otimes 2} \simeq \oo_{X}$ be isomorphisms satisfying that $\iota_{1} \circ \iota_{2}^{-1} \colon \oo_{X} \rightarrow \oo_{X}$ sends the global section $1$ to $\alpha \in K$.
Let $\widetilde{X}_{i} \rightarrow X$ be K3 double covers defined by $\iota_{1}$ and $\iota_{2}$. 
For $\sigma \in \Gal(\overline{K}/K),$ let $\sigma_{i} \colon \widetilde{X}_{i, \overline{K}} \rightarrow \widetilde{X}_{i, \overline{K}}$ be the Galois action defined by the $K$-structure $\widetilde{X}_{i}$. 
By the definition of a K3 double cover, there exits an isomorphism $\widetilde{X}_{1, K[\sqrt{\alpha}]} \simeq \widetilde{X}_{2,K[\sqrt{\alpha}]}$ over $X_{K[\sqrt{\alpha}]}$. Via this isomorphism, for any $\sigma$ with $\sigma (\sqrt{\alpha}) = - \sqrt{\alpha}$,
$\sigma_{1}$ is equal to $\tau \circ \sigma_{2}$, where $\tau$ is the Enriques involution.
So if the order of $\alpha$ is odd, then at most one of $H^{2}_{\et}(\widetilde{X}_{i}, \Q_{\ell})$  $(i=1,2)$ is unramified. 
We also note that if the order of $\alpha$ is even, then the unramifiedness is preserved.
\end{remarksub}

\begin{lemmasub}\label{goodcohgood}
Let $K$ be a discrete valuation field with the residual characteristic $p \neq 2$, and let $X$ be an Enriques surface over $K$. If $X$ admits good reduction, then 
$X$ admits a cohomological good K3 cover.
\end{lemmasub}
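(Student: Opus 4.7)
The plan is to construct a smooth proper integral model of a K3 double cover of $X$ and then deduce unramifiedness from the smooth and proper base change theorem for $\ell$-adic cohomology.

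First, by Proposition \ref{ito} we may choose a smooth projective model $\mathcal{X}/\oo_{K}$ with $\mathcal{X}_{K}\simeq X$. Let $k$ denote the residue field; since $\chara k = p \neq 2$, the special fiber $\mathcal{X}_{k}$ is an Enriques surface, so $\omega_{\mathcal{X}_{k}/k}^{\otimes 2}\simeq \oo_{\mathcal{X}_{k}}$ and $H^{i}(\mathcal{X}_{k},\oo_{\mathcal{X}_{k}})=0$ for $i=1,2$.

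The key step is to upgrade this $2$-torsion relation to an isomorphism $\omega_{\mathcal{X}/\oo_{K}}^{\otimes 2}\simeq \oo_{\mathcal{X}}$ on the integral model. Triviality of a line bundle descends along the faithfully flat morphism $\oo_{K}\to \widehat{\oo_{K}}$, so we may assume $\oo_{K}$ is complete. By Grothendieck's existence theorem, $\Pic(\mathcal{X})\simeq \varprojlim_{n}\Pic(\mathcal{X}_{n})$ with $\mathcal{X}_{n}\coloneqq \mathcal{X}\times_{\oo_{K}}(\oo_{K}/\m^{n})$, and the exponential-type short exact sequence $0\to \m^{n}/\m^{n+1}\otimes \oo_{\mathcal{X}_{k}}\to \oo_{\mathcal{X}_{n+1}}^{\times}\to \oo_{\mathcal{X}_{n}}^{\times}\to 0$ attached to each square-zero thickening identifies the kernel of $\Pic(\mathcal{X}_{n+1})\to \Pic(\mathcal{X}_{n})$ with a quotient of $H^{1}(\mathcal{X}_{k},\oo_{\mathcal{X}_{k}})\otimes_{k}(\m^{n}/\m^{n+1})=0$. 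Therefore $\Pic(\mathcal{X})\to \Pic(\mathcal{X}_{k})$ is injective, and since $\omega_{\mathcal{X}/\oo_{K}}^{\otimes 2}$ restricts on the special fiber to $\oo_{\mathcal{X}_{k}}$, it is itself trivial.

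Now fix such an isomorphism, which puts an $\oo_{\mathcal{X}}$-algebra structure on $\oo_{\mathcal{X}}\oplus\omega_{\mathcal{X}/\oo_{K}}$, and set $\widetilde{\mathcal{X}}\coloneqq\Spec_{\mathcal{X}}(\oo_{\mathcal{X}}\oplus \omega_{\mathcal{X}/\oo_{K}})$. Because $2$ is invertible in $\oo_{K}$, the morphism $\widetilde{\mathcal{X}}\to \mathcal{X}$ is finite \'etale of degree $2$, so $\widetilde{\mathcal{X}}$ is smooth and proper over $\oo_{K}$; its generic fiber $\widetilde{X}\coloneqq \widetilde{\mathcal{X}}_{K}$ is, by construction, a K3 double cover of $X$ in the sense of Definition \ref{K3 double cover}. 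Applying smooth and proper base change to $\widetilde{\mathcal{X}}/\oo_{K}$, the $G_{K}$-action on $H^{2}_{\et}(\widetilde{X}_{\overline{K}},\Q_{\ell})$ factors through the quotient $G_{K}\twoheadrightarrow G_{k}$ for every $\ell\neq p$, so the representation is unramified.

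The main obstacle is the middle step, which turns the $2$-torsion of the dualizing sheaf on each geometric fiber into the analogous relation on the whole smooth proper model; once that is in hand, the construction of the integral K3 double cover and the invocation of smooth and proper base change are essentially formal.
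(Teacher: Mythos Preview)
Your argument follows the same architecture as the paper's: produce a smooth projective model, trivialize $\omega^{\otimes 2}$ on it, form the \'etale double cover, and invoke smooth and proper base change. There is one ordering slip: Proposition~\ref{ito} is stated only for \emph{complete} discrete valuation fields, but you apply it before passing to the completion; the paper handles this by reducing to complete $K$ at the very outset, using Remark~\ref{remarkcohgood} to see that the existence of a cohomological good K3 cover is insensitive to replacing $K$ by $\widehat{K}$. Once that reordering is made (or, alternatively, once you note that your deformation-theoretic step works equally well for the smooth proper algebraic space model coming straight from the definition, so that Proposition~\ref{ito} is not even needed), your proof is correct.

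The one substantive difference lies in the middle step. The paper trivializes $\omega_{\mathcal{X}/\oo_{K}}^{\otimes 2}$ by observing that it is trivial on the \emph{generic} fibre and citing separatedness of the Picard functor over the discrete valuation ring \cite[Theorem 8.4.3]{Bosch1990}. You instead use that it is trivial on the \emph{special} fibre and establish the injection $\Pic(\mathcal{X})\hookrightarrow\Pic(\mathcal{X}_{k})$ via $H^{1}(\mathcal{X}_{k},\oo_{\mathcal{X}_{k}})=0$ together with Grothendieck's existence theorem. Both routes are valid; the paper's is a one-line citation, while yours is more self-contained and makes explicit the role of the cohomological vanishing on the closed fibre.
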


\begin{proof}
Taking a completion, we may assume $K$ is complete (see Remark \ref{remarkcohgood}). 
By Proposition \ref{ito}, one can take a smooth projective scheme model $\mathcal{X}$ over $\oo_{K}$. 
By the separatedness of the Picard functor \cite[Theorem 8.4.3]{Bosch1990}, we have an isomorphism $\omega_{\mathcal{X}/\oo_{K}}^{\otimes2} \simeq \oo_{\mathcal{X}}.$ 
Then we can attach an $\oo_{\mathcal{X}}$-algebra structure on the $\oo_{\mathcal{X}}$-module 
$\oo_{\mathcal{X}} \oplus \omega_{\mathcal{X}/\oo_{K}}$.  
Then a double covering $\widetilde{\mathcal{X}} \coloneqq 
\Spec_{\mathcal{X}} (\oo_{\mathcal{X}} \oplus \omega_{\mathcal{X}/\oo_{K}})$ is finite \'{e}tale over $\mathcal{X}$ by the assumption on the residual characteristic.
So $\widetilde{\mathcal{X}}$ gives a smooth projective scheme model of $\widetilde{\mathcal{X}}_{K}$, which is a K3 double cover of $X$. 
\end{proof}

\begin{remarksub}
There exists an Enriques surface which does not admit good reduction, but whose K3 double cover admits good reduction. See \cite[Theorem 3.6]{Liedtke2018} for such an example having flower pot reduction.
\end{remarksub}

\begin{lemmasub}\label{enrvsk3}
Let $k$ be a field of characteristic away from $2$. Let $Y$ be a K3 surface over $k$.
Then there exists only finitely many $k$-isomorphism classes of  Enriques surfaces 
which is a $\Z/2\Z$-quotient of $Y$.
\end{lemmasub}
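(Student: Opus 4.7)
The plan is to reformulate the lemma as a finiteness statement about conjugacy classes of involutions, establish this finiteness over $\bar k$ by lattice-theoretic means, and then descend from $\bar k$ to $k$ via Galois cohomology.

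First I would establish a bijection between the set in the statement and the set of fixed-point-free involutions $\iota$ of $Y$ over $k$ modulo $\Aut_{k}(Y)$-conjugation, sending $X = Y/\iota$ to the class of $\iota$. Surjectivity is tautological. For injectivity, the key point is that in characteristic $\neq 2$ the K3 double cover construction $\widetilde{X} = \Spec_{X}(\oo_{X} \oplus \omega_{X/k})$ recovers $Y$ from $X$ up to at most a quadratic twist (cf.\ Remark \ref{remarkcohgood}), so any $k$-isomorphism $Y/\iota_{1} \simeq Y/\iota_{2}$ lifts to a $k$-automorphism of $Y$ intertwining $\iota_{1}$ and $\iota_{2}$; any quadratic-twist ambiguity only changes the count by a finite factor.

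Next I would base change to $\bar k$ and invoke the following finiteness, due to Ohashi over $\C$ and extendable to any algebraically closed field of characteristic $\neq 2$: the set of fixed-point-free involutions of $Y_{\bar k}$ modulo $\Aut_{\bar k}(Y_{\bar k})$-conjugation is finite. This rests on the Global Torelli theorem for K3 surfaces together with Nikulin's classification of involutions of K3 lattices, which shows that such a geometric conjugacy class is pinned down by the action of $\iota^{*}$ on $H^{2}_{\et}(Y_{\bar k}, \Z_{\ell}(1))$ and that only finitely many such lattice actions occur.

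Finally I would descend from $\bar k$ to $k$. Fixing a geometric conjugacy class with a $k$-rational representative $\iota_{0}$ and writing $X_{0} = Y/\iota_{0}$, the fiber of the base-change map over $[\iota_{0}]$ is governed by the kernel of $H^{1}(G_{k}, C_{\bar k}(\iota_{0})) \to H^{1}(G_{k}, \Aut_{\bar k}(Y_{\bar k}))$, where $C_{\bar k}(\iota_{0})$ is an extension of $\Aut_{\bar k}(X_{0,\bar k})$ by $\langle\iota_{0}\rangle$. The main obstacle is the finiteness of this kernel, since $\Aut_{\bar k}(X_{0,\bar k})$ itself can be infinite; I expect to handle it by passing to the induced action on the finitely generated $G_{k}$-lattice $\mathrm{NS}(Y_{\bar k})$, where each fixed-point-free involution is determined up to finite ambiguity by its fixed Enriques-type sublattice, and such sublattices form a finite set up to lattice isometry.
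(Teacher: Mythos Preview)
Your strategy of base-changing to $\bar k$ and then descending is genuinely different from the paper's, but it has a real gap in Step~3, and a secondary issue in Step~2.

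In Step~3 you correctly identify that the fiber over a geometric conjugacy class is controlled by $\ker\bigl(H^{1}(G_{k},C_{\bar k}(\iota_{0}))\to H^{1}(G_{k},\Aut_{\bar k}(Y_{\bar k}))\bigr)$, but you do not prove this kernel is finite; you only say you ``expect'' to handle it by passing to $\mathrm{NS}(Y_{\bar k})$. The centralizer $C_{\bar k}(\iota_{0})$ surjects onto $\Aut_{\bar k}(X_{0,\bar k})$, which for an Enriques surface is typically infinite, so $H^{1}(G_{k},C_{\bar k}(\iota_{0}))$ has no a~priori finiteness, and the sublattice argument you sketch concerns $\bar k$-conjugacy, not the Galois-cohomological fiber. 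This is the crux of the lemma and it is left open. In Step~2, invoking the Global Torelli theorem to extend Ohashi's result to algebraically closed fields of positive characteristic $\neq 2$ is not justified; the Torelli package you need is not available in that form.

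The paper bypasses both problems by never leaving $k$. It uses the structural result of Bright--Logan--van~Luijk that there is a homomorphism $\Aut_{k}(Y)\ltimes R_{Y}\to \algO(\Pic Y)$ with finite kernel and finite-index image, where $\Pic Y$ is a finitely generated abelian group; then Borel's theorem on arithmetic groups gives finitely many conjugacy classes of torsion elements in $\algO(\Pic Y)$, and a purely group-theoretic transfer lemma (Ohashi, Lemma~1.4) pulls this back to $\Aut_{k}(Y)$. This argument is uniform in the characteristic and requires neither Torelli nor any Galois descent.
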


\begin{proof}
It suffices to show the finiteness of conjugacy classes of finite order elements in $\Aut_{F}(Y)$. 
By \cite[Proposition 3.10]{Bright2019}, there exists a group homomorphism $\Aut_{F}(Y) \ltimes R_{Y} \rightarrow \algO(\Pic Y)$ which has finite kernel and image of finite index, where $R_{Y}$ is the Galois fixed part of the Weyl group of $Y_{\overline{F}}$.
Here, by \cite[Section 5, (a)]{Borel1963}, there exist only finitely many conjugacy classes of finite order elements in $\algO (\Pic Y)$. 
So the desired property of $\Aut_{F}(Y)$ follows from \cite[Lemma 1.4]{Ohashi2007}.
\end{proof}
%
%

\begin{theoremsub}\label{shafenr}
Let $F$ be a finitely generated field over $\Q$, and $R$ be a finite type algebra over $\Z$ which is a normal domain with the fraction field $F$.
Then, the set 
\[
\Shaf \coloneqq
\left\{X \left|
\begin{array}{l}
X\colon \textup{Enriques surface over }F, \\
\textup{for any height } 1 \textup{ prime ideal } \p \in \Spec R, \\
X \textup{ admits a cohomological good K3 cover at } \p
\end{array}
\right.\right\}
/F \textup{-isom}
\]
is finite.
\end{theoremsub}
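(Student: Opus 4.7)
The plan is to reduce the statement to the Shafarevich conjecture for K3 surfaces (\cite{She2017}, \cite{Takamatsu2018}) by producing, for each $X \in \Shaf$, a K3 double cover that is cohomologically good at every height-$1$ prime of $R$, and then invoking Lemma \ref{enrvsk3} to pass from K3 surfaces back to Enriques surfaces.

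I first make a reduction on $R$. If $R' = R[1/N]$ is any localization of $R$, then the height-$1$ primes of $R'$ form a subset of the height-$1$ primes of $R$, so the condition defining $\Shaf$ becomes weaker when $R$ is replaced by $R'$, giving an inclusion of the $R$-version of $\Shaf$ into the $R'$-version; it therefore suffices to prove finiteness over $R'$. Since $R$ is a finite type normal $\Z$-algebra, $\Pic(R)$ is finitely generated, hence $\Pic(R)/2$ is finite. By inverting finitely many elements of $R$ so as to kill the generators of $\Pic(R)/2$, I may henceforth assume that $\Pic(R)/2 = 0$.

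For each $X \in \Shaf$, fix any K3 double cover $\widetilde{X}$ of $X$ over $F$ (which exists by \cite[Proposition 10.14]{Badescu2001}), and let $T \subset \Spec R$ be the finite set of height-$1$ primes of $R$ at which $\widetilde{X}$ has ramified $\ell$-adic cohomology; finiteness of $T$ follows by spreading $\widetilde{X}$ out to a smooth K3 model over a dense open subset of $\Spec R$. Every K3 double cover of $X$ is, up to $X$-isomorphism, of the form $\widetilde{X}_\beta$ for some $\beta \in F^\times/(F^\times)^2$ (the twist depending on the choice of trivialization $\omega_{X/F}^{\otimes 2} \simeq \oo_X$), and combining the hypothesis $X \in \Shaf$ with Remark \ref{remarkcohgood} shows: $\widetilde{X}_\beta$ is cohomologically good at a height-$1$ prime $\p$ of $R$ if and only if $\ord_\p(\beta) \equiv \epsilon_\p \pmod{2}$, where $\epsilon_\p = 1$ for $\p \in T$ and $\epsilon_\p = 0$ otherwise. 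Since $T$ is finite and $\Pic(R)/2 = 0$, the divisor $\sum_{\p \in T}\p$ lies in $2\,\Pic(R)$, so I can find $\alpha \in F^\times$ with $\ord_\p(\alpha) \equiv \epsilon_\p \pmod{2}$ for every height-$1$ prime $\p$ of $R$. The corresponding twist $\widetilde{X}_\alpha$ is then a K3 double cover of $X$ with cohomological good reduction at every height-$1$ prime of $R$.

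Applying the Shafarevich conjecture for K3 surfaces to the collection of $\widetilde{X}_\alpha$ produced above yields only finitely many such K3 surfaces up to $F$-isomorphism, and Lemma \ref{enrvsk3} then gives only finitely many Enriques quotients for each such K3 surface, proving that $\Shaf$ is finite. The main obstacle is the construction of the global twist $\alpha$ in the preceding paragraph: the hypothesis only supplies K3 covers that are good one prime at a time, so the local parity data must be realized by a single element of $F^\times$, which is precisely why the preliminary reduction to $\Pic(R)/2 = 0$ is necessary.
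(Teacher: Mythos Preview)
Your argument is correct and follows essentially the same route as the paper: reduce to a situation where the Picard obstruction vanishes, use Remark \ref{remarkcohgood} to convert the prime-by-prime good covers into a single K3 double cover that is cohomologically good everywhere, and then apply the K3 Shafarevich conjecture together with Lemma \ref{enrvsk3}. The only cosmetic differences are that you reduce to $\Pic(R)/2=0$ rather than to $\Pic(R)$ trivial (which is indeed all the parity criterion from Remark \ref{remarkcohgood} requires), and that you do not explicitly invert $2$ in $R$, which the paper does so that Definition \ref{cohom good} and the parity argument of Remark \ref{remarkcohgood} apply at every remaining height-$1$ prime.
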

\begin{proof}
First, we note that $\Pic (\Spec R)$ is finitely generated (\cite[Chapter 2, Theorem 7.6]{Lang1983}). Let $\oo(D_{1}), \ldots, \oo(D_{r})$ be its generators, where $D_{i}$ are Cartier divisors. Then by removing $\Supp (D_{i})$, we may assume that $\Pic (\Spec R)$ is trivial. Moreover, by shrinking $\Spec R$ again, we may assume $1/2 \in R$.

Next, we will show that for any $X \in \Shaf$, there exists a K3 double cover $\widetilde{X}$ such that for any height $1$ prime $\p \in \Spec R$, $H^{2}_{\et}(X_{\overline{F}}, \Q_{\ell})$ is unramified at $\p$ for $\ell \notin \p$.
Take an element $X \in \Shaf$. Fix an isomorphism $\iota \colon \omega_{X/F}^{\otimes 2} \simeq \oo_{X}$, and let $\widetilde{X}'$ be a corresponding K3 double cover.
One can take a finite set $S$ of height $1$ prime ideals of $\Spec R$ such that for any $\p \notin S$, $H^{2}_{\et}(\widetilde{X}',\Q_{\ell})$ is unramified for $\ell \notin \p$.
Moreover, for any height $1$ prime $\p \in \Spec R$, there exists an isomorphism $\iota_{\p}\colon \omega_{X/F}^{\otimes 2} \simeq \oo_{X}$ such that the corresponding K3 double cover has $\ell$-adic cohomologies which are unramified at $\p$.
Suppose that $\iota_{\p} \circ \iota^{-1} \colon \oo_{X} \rightarrow \oo_{X}$ sends the global section $1$ to $\alpha_{\p} \in F$. We may assume that $\alpha_{\p} = 1$ for $\p \notin S$.
Since $\Pic (\Spec R)$ is trivial, there exists $\alpha \in F$ satisfying that $\ord_{\p} (\alpha) = \ord_{\p} (\alpha_{\p})$ for any height $1$ prime $\p \in \Spec R$.
Then the composition $\alpha \circ \iota \colon \omega_{X/F}^{\otimes 2} \simeq \oo_{X}$ induces a desired K3 double cover $\widetilde{X}$ (see Remark \ref{remarkcohgood}).

Therefore, by Lemma \ref{enrvsk3}, we can reduce the problem to the finiteness of K3 double covers.
So this theorem follows from \cite[Theorem 1.0.1]{Takamatsu2018}.
%
%
\end{proof}

\begin{corollarysub}\label{corshafenr}
Let $F$ be a finitely generated field over $\Q$, and $R$ be a finite type algebra over $\Z$ which is a normal domain with the fraction field $F$.
Then, the set 
\[
\left\{X \left|
\begin{array}{l}
X\colon \textup{Enriques surface over }F, \\
X \textup{ admits good reduction at any height }1 \textup{ prime ideal }\p \in \Spec R
\end{array}
\right.\right\}
/F\textup{-isom}
\]
is finite.
\end{corollarysub}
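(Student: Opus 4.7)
The plan is to deduce the corollary directly from Theorem \ref{shafenr} together with Lemma \ref{goodcohgood}. The only point that requires care is that Lemma \ref{goodcohgood} requires residual characteristic different from $2$, while the corollary's hypothesis imposes good reduction at every height $1$ prime of $R$, including possibly those of residual characteristic $2$.

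To bypass this, I would first replace $R$ by the localization $R[1/2]$. This ring is again a finite type $\Z$-algebra that is a normal domain with fraction field $F$, and every height $1$ prime of $R[1/2]$ has residual characteristic different from $2$. Moreover, the height $1$ primes of $R[1/2]$ correspond precisely to the height $1$ primes $\p$ of $R$ with $2 \notin \p$, so the good reduction hypothesis at every height $1$ prime of $R$ implies, a fortiori, good reduction at every height $1$ prime of $R[1/2]$. Hence the set in the statement of the corollary for $R$ is contained in the analogous set defined using $R[1/2]$, and it suffices to prove finiteness of the latter.

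Now, given any Enriques surface $X$ over $F$ which admits good reduction at every height $1$ prime $\p$ of $R[1/2]$, applying Lemma \ref{goodcohgood} at each such $\p$ produces a K3 double cover of $X$ whose $\ell$-adic cohomology is unramified at $\p$ for $\ell \notin \p$; that is, $X$ admits a cohomological good K3 cover at $\p$ in the sense of Definition \ref{cohom good}. Therefore $X$ belongs to the set $\Shaf$ of Theorem \ref{shafenr} applied to the ring $R[1/2]$, which is finite.

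I do not anticipate any significant obstacle: the corollary is essentially a formal consequence of Theorem \ref{shafenr} and the implication (good reduction) $\Rightarrow$ (cohomological good K3 cover) away from residual characteristic $2$ established in Lemma \ref{goodcohgood}. The only subtlety is the inversion of $2$, which is a standard shrinking argument and preserves all the hypotheses on $R$.
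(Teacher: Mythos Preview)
Your proposal is correct and follows essentially the same approach as the paper: first reduce to the case $1/2 \in R$ by localizing (the paper simply remarks ``we may assume $1/2 \in R$''), then invoke Lemma \ref{goodcohgood} to see that good reduction at each height $1$ prime yields a cohomological good K3 cover there, and conclude by Theorem \ref{shafenr}. Your write-up is a bit more explicit about why the shrinking step is harmless, but the argument is the same.
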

\begin{proof}
Remark that we may assume $1/2 \in R$.
Therefore, the desired finiteness follows from Lemma \ref{goodcohgood} and Theorem \ref{shafenr}.
\end{proof}


\newcommand{\etalchar}[1]{$^{#1}$}
\providecommand{\bysame}{\leavevmode\hbox to3em{\hrulefill}\thinspace}
\providecommand{\MR}{\relax\ifhmode\unskip\space\fi MR }
\providecommand{\MRhref}[2]{%
  \href{http://www.ams.org/mathscinet-getitem?mr=#1}{#2}
}
\providecommand{\href}[2]{#2}

\end{document}